\newtheorem{theorem}{\textsc{\textbf Theorem}}[section]
\newtheorem{definition}[theorem]{\textsc{\textbf Definition}}
\newtheorem{lemma}[theorem]{\textsc{\textbf Lemma}}
\newtheorem{remark}[theorem]{\textsc{\textbf Remark}}
\newenvironment{proof}{\mbox{\bf Proof}:}
{\hfill\mbox{\ding{113}}\bigskip}
\def\eqalign#1{\null\vcenter{\def\\{\cr}\openup\jot\m@th
  \ialign{\strut$\displaystyle{##}$\hfil&$\displaystyle{{}##}$\hfil
      \crcr#1\crcr}}\,}
\let\lt=<
\let\gt=>
\def\processVert{\ifmmode|\else\textbar\fi}
\def\subparagraph{\@startsection{paragraph}{5}{2\parindent}{0ex plus 0.1ex minus 0.1ex}%
{0ex}{\normalfont\small\itshape}}%
\newcommand\role[1]{\unskip}
\newcommand\aucollab[1]{\unskip}
\def\checkGraphicsWidth{\ifdim\Gin@nat@width>\textwidth
	\tsGraphicsScaleX\textwidth\else\Gin@nat@width\fi}
\def\checkGraphicsHeight{\ifdim\Gin@nat@height>.9\textheight
	\tsGraphicsScaleY\textheight\else\Gin@nat@height\fi}
\def\fixFloatSize#1{\@ifundefined{processdelayedfloats}{\setbox0=\hbox{\includegraphics{#1}}\ifnum\wd0<\columnwidth\relax\renewenvironment{figure*}{\begin{figure}}{\end{figure}}\fi}{}}
\let\ts@includegraphics\includegraphics
\def\inlinegraphic[#1]#2{{\edef\@tempa{#1}\edef\baseline@shift{\ifx\@tempa\@empty0\else#1\fi}\edef\tempZ{\the\numexpr(\numexpr(\baseline@shift*\f@size/100))}\protect\raisebox{\tempZ pt}{\ts@includegraphics{#2}}}}
\def\URL#1#2{\@ifundefined{href}{#2}{\href{#1}{#2}}}
\def\UrlOrds{\do\*\do\-\do\~\do\'\do\"\do\-}%
\g@addto@macro{\UrlBreaks}{\UrlOrds}
\def\wileyIndent{1pt}
\renewenvironment{abstract}
{\vspace*{-1pc}\trivlist\item[]\leftskip\wileyIndent\hrulefill\par\vskip4pt\noindent\textbf{\abstractname}\mbox{\null}\\}{\par\noindent\hrulefill\endtrivlist}
\def\author#1{\gdef\@author{\hskip-\dimexpr(\tabcolsep)\hskip\wileyIndent\parbox{\dimexpr\textwidth-\wileyIndent}{\centering\bfseries#1}}}
\def\title#1{\gdef\@title{\centering\bfseries\ifx\@articleType\@empty\else\@articleType\\\fi#1}}
\let\@articleType\@empty \def\articletype#1{\gdef\@articleType{{\normalfont\itshape#1}}}
 \def\audegree#1{}
\begin{document}

\title{On the Notions of {\em Rudimentarity, Primitive Recursivity} and {\em Representability} of Functions and Relations}

\author{\textsc{Saeed Salehi}}

\date{\sf 1 March 2020}

\def\RunningHead{\!\!\!\!\!{\sc S. Salehi}: \; {\em  Rudimentarity, Primitive Recursivity \& Representability}}
\def\RunningAuthor{\textsc{Saeed Salehi}}

\maketitle

\vspace{-2em}
\begin{abstract}
It is quite well-known from Kurt G\"odel's (1931) ground-breaking result on the  Incompleteness Theorem that  rudimentary relations (i.e., those definable by bounded formulae) are primitive recursive, and that primitive recursive functions are representable in sufficiently strong arithmetical theories. It is also known, though perhaps not as well-known as the former one, that some primitive recursive relations are not  rudimentary. We present a simple and elementary proof of this fact in the first part of the paper. In the second part, we review some possible notions of representability of functions studied in the literature, and give a new proof of the equivalence of the weak representability with the (strong) representability of functions in sufficiently strong arithmetical theories. Our results shed some new light on the notions of rudimentary, primitive recursive, and representable functions and relations, and clarify, hopefully, some misunderstandings and confusing errors in the literature.

\medskip

\noindent
{\bf 2010 AMS Subject Classification}:
03F40
$\cdot$
03D20  	
$\cdot$
03F30.


\noindent
{\bf Keywords}: 
bounded formula $\cdot$ the incompleteness theorem $\cdot$ primitive recursive functions / relations $\cdot$ rudimentary relations $\cdot$  representability.
\end{abstract}

\section{Introduction and Preliminaries}\label{sec:intro}
Primitive recursive functions are what were called ``rekursiv'' by Kurt G\"odel in his seminal 1931 paper \cite{feferman,godel} where he proved the celebrated incompleteness theorem. The main features of the primitive recursive functions used by G\"odel  were the following:
\begin{enumerate}
\item They are {\em computable} (i.e., for each primitive recursive function there exists an algorithm that computes it). However, we now know that they do not make up the whole (intuitively) computable functions (from tuples of natural numbers to natural numbers, $\mathbb{N}^k\rightarrow\mathbb{N}$). So,   ``rekursiv'' functions are now called ``primitive recursive'' functions, which constitute a sub-class  of  {\sl recursive functions} that 
     are believed to constitute the whole computable functions.
\item They are {\em representable} in (sufficiently  expressive  and sufficiently strong) formal arithmetical theories. It is now known that, more generally, (only) recursive functions   are representable in (all the) recursively enumerable, sufficiently strong and sufficiently expressive theories (see Section~\ref{sec:rep} below).
\item Theories whose {\em set of axioms} are primitive recursive  and extend a base theory (such as  Robinson's Arithmetic $\textsf{\textsl{Q}}$), are incomplete. It was later found out that this holds more generally for recursively enumerable extensions of $\textsf{\textsl{Q}}$. Also, by William Craig's Trick, every such theory  is equivalent with another theory whose set of axioms is rudimentary (i.e., definable by a bounded formula).
\end{enumerate}
Even though  one can set up the whole theory of computable functions (aka recursion theory) and the incompleteness theorems without introducing  the notion of primitive recursive functions (and relations),  the theory of primitive recursive functions  is a main topic in the literature on recursive function theory and the incompleteness theorems. For the sake of completeness we review some basic notions of this theory.

\begin{definition}[Primitive Recursive  Functions]\label{def:prf}{\rm

\

\noindent
The class of {\em primitive recursive} ({\sc pr}) functions is the smallest class that contains the initial functions

(i) the constant zero function $\boldsymbol\zeta\colon\mathbb{N}\rightarrow\mathbb{N}, \,  \boldsymbol\zeta(x)\!=\!0$,

(ii) the successor function $\boldsymbol\sigma\colon\mathbb{N}\rightarrow\mathbb{N}, \, \boldsymbol\sigma(x)\!=\!x\!+\!1$, and

(iii) the projection functions $\boldsymbol\pi_i^n\colon\mathbb{N}^n\rightarrow\mathbb{N}, \, \boldsymbol\pi_i^n(x_1,\cdots,x_n)\!=\!x_i$, for

\quad any $n\geqslant 1$ where $i\in\{1,\cdots,n\}$;

\noindent and is closed under

(I)  composition of functions, i.e., contains the function $h\colon\mathbb{N}^n\rightarrow\mathbb{N}$, if it already contains the functions $g_1,\cdots,g_m\colon\mathbb{N}^n\rightarrow\mathbb{N}$ and $f\colon\mathbb{N}^m\rightarrow\mathbb{N}$, where  $h(x_1,\cdots,x_n)\!=\!f\big(
g_1(x_1,\ldots,x_n),\cdots,g_m(x_1,\cdots,x_n)\big)$, and

(II)  primitive recursion, i.e., contains  the function  $h\colon\mathbb{N}^{n+1}\rightarrow\mathbb{N}$, if it already contains the functions $f\colon\mathbb{N}^{n}\rightarrow\mathbb{N}$ and $g\colon\mathbb{N}^{n+2}\rightarrow\mathbb{N}$, where $h\colon\mathbb{N}^{n+1}\rightarrow\mathbb{N}$ is inductively defined by

$\begin{cases}
h(x_1,\cdots,x_n,0)\!=\!f(x_1,\cdots,x_n), &  \\ h(x_1,\cdots,x_n,x\!+\!1)\!=\!g\big(h(x_1,\cdots,x_n,x),x_1,\cdots,x_n,x\big). &
\end{cases}$
}\end{definition}

\vspace{-2.25em}
\hfill\ding{71}

\vspace{1em}

\noindent 
By Hermann Grassmann's recursive definition of addition and multiplication, it can be shown that these functions  ($+\!\colon\!\mathbb{N}^2\rightarrow\mathbb{N}, (x,y)\mapsto x\!+\!y$ and  $\times\!\colon\!\mathbb{N}^2\rightarrow\mathbb{N}, (x,y)\mapsto x\!\cdot\!y$) are {\sc pr}; so are the following sign functions are primitive recursive:

${\sf sg}(x)=\begin{cases}0 &  \mbox{ if  }\ x\!=\!0, \\ 1 & \mbox{ if  }\ x\!\neq\!0, \end{cases}$ \qquad and
\qquad $\widetilde{\sf sg}(x)=\begin{cases}1 &  \mbox{ if  }\ x\!=\!0, \\ 0& \mbox{ if  }\ x\!\neq\!0. \end{cases}$

\bigskip

\begin{definition}[Primitive Recursive  Relations]\label{def:prr}{\rm

\

\noindent
The {\em characteristic function} of a relation $R\!\subseteq\!\mathbb{N}^n$  is $\boldsymbol\chi_R\colon\mathbb{N}^n\rightarrow\{0,1\}$,
$\boldsymbol\chi_R(x_1,\cdots,x_n) = \begin{cases} 1 & \mbox{ if } (x_1,\cdots,x_n)\in R, \\ 0 & \mbox{ if } (x_1,\cdots,x_n)\not\in R. \end{cases}$

\noindent A relation is called {\em primitive recursive} ({\sc pr}),  if its characteristic function is primitive recursive.
}\hfill\ding{71} \end{definition}

For example, the equality ($=$) and inequality ($\leqslant$)   can be shown to be {\sc pr} relations. The following identities show that the class of {\sc pr}  relations is closed under Boolean operations and bounded quantifications:

$\boldsymbol\chi_{R\cap S}\!=\!\boldsymbol\chi_R\cdot\boldsymbol\chi_S; \quad
\boldsymbol\chi_{R^\complement}\!=\!
\widetilde{\sf sg}(\boldsymbol\chi_R);
 \quad \boldsymbol\chi_{R\cup S}\!=\!{\sf sg}(\boldsymbol\chi_R\!+\!\boldsymbol\chi_S);$

\bigskip

$\begin{cases}
\boldsymbol\chi_{\forall x\leqslant\alpha R(\vec{z},x)}(\vec{z},0)\!=\!\boldsymbol\chi_R(\vec{z},0), &   \\
\boldsymbol\chi_{\forall x\leqslant\alpha R(\vec{z},x)}(\vec{z},\alpha\!+\!1)\!=
\!\boldsymbol\chi_{\forall x\leqslant\alpha R(\vec{z},x)}(\vec{z},\alpha)
\cdot\boldsymbol\chi_R(\vec{z},\alpha\!+\!1); &
\end{cases}$

$\begin{cases}
\boldsymbol\chi_{\exists x\leqslant\alpha R(\vec{z},x)}(\vec{z},0)\!=\!
\boldsymbol\chi_R(\vec{z},0), &   \\
\boldsymbol\chi_{\exists x\leqslant\alpha R(\vec{z},x)}(\vec{z},\alpha\!+\!1)\!=\!
{\sf sg}\big(\boldsymbol\chi_{\forall x\leqslant\alpha\,P(\vec{z},x)}(\vec{z},\alpha)\!+\!
\boldsymbol\chi_P(\vec{z},\alpha\!+\!1)\big). &
\end{cases}$

\begin{definition}[Rudimentary  Relations]\label{def:delta0}{\rm

\

\noindent
A formula in the language of arithmetic $\langle 0,1,+,\times,\leqslant\rangle$ is called {\em bounded}, if it has been constructed from atomic formulas (of the form $t\!=\!s$ or $t\!\leqslant\!s$, for terms $s,t$) by means of negation, conjunction, disjunction, implication, and bounded quantifications (of the form $\forall x\!\leqslant\!t$ or $\exists x\!\leqslant\!t$, where the formula $\forall x\!\leqslant\!t\,\mathcal{A}(x,t)$ abbreviates
$\forall x \big[x\!\leqslant\!t\rightarrow \mathcal{A}(x,t)\big]$ and
$\exists x\!\leqslant\!t\,\mathcal{A}(x,t)$ is an abbreviation for  $\exists x \big[x\!\leqslant\!t\wedge \mathcal{A}(x,t)\big]$ for term $t$ and variable $x$ which is not free in $t$).

The class of bounded formulas is denoted by $\Delta_0$.

\noindent A relation $R\subseteq\mathbb{N}^n$ is called {\em rudimentary} or {\em bounded definable}, or simply $\Delta_0$,  if it can be defined by a $\Delta_0$-formula, i.e., there exists a $\Delta_0$-formula $\varphi(x_1,\cdots,x_n)$ such that
$R=\{(x_1,\cdots,x_n)\mid\mathbb{N}\models\varphi(x_1,\cdots,x_n)\}$.
}\hfill\ding{71} \end{definition}

The above arguments show that all the $\Delta_0$ relations are {\sc pr}; see also e.g.  \cite{cook,hedman,rautenberg}.
The question as to whether the converse holds, i.e., {\sl whether  every {\sc pr} relation is $\Delta_0$}, has been mentioned in very few  places. Unfortunately, as will be indicated,  some of them are  wrong or misleading:

\begin{itemize}
\item[(1)]   On page 315 of \cite{hedman} we read: ``A relation is primitive recursive if and only if it is definable by a $\Delta_0$ formula.
We presently prove one direction of this fact. The other direction shall become
apparent after Section 8.3 of the next chapter and is left as Exercise 8.6.''

This leaves the reader wondering what (theorems or techniques) will be provided in Chapter 8 (the incompleteness theorems) of the book~\cite{hedman} that will enable the reader to show that every {\sc pr} relation is rudimentary, i.e., $\Delta_0$ definable. The fact of the matter is that, as will be seen below, {\sl it is not true that  every {\sc pr} relation is $\Delta_0$}.

\item[(2)]
On page 239 of \cite{rautenberg} we read as {\bf Remark 1}, ``Induction on the
$\Delta_0$-formulas readily shows that all $\Delta_0$-predicates are p.r.  The converse does not
hold; an example is the graph of the very rapidly growing hyperexponentiation,
recursively defined by ${\rm hex}(a, 0)\!=\!1$ and ${\rm hex}(a,Sb)\!=\!a^{{\rm hex}(a,b)}$.''

The {\em graph} of a function $f\colon X\rightarrow Y$ is, by definition,  the relation $$\Gamma_f\!=\!\{(x,y)\in X\!\times\!Y\mid y\!=\!f(x)\}.$$
Let us note that the graph $\Gamma_f$ of a {\sc pr} function $f$ is a {\sc pr} relation, since $\boldsymbol\chi_{\Gamma_f}(\vec{a},b)\!=\!
\boldsymbol\chi_=(f(\vec{a}),b)$.
Now,  ${\rm hex}$ is a {\sc pr}  function, and so its graph is a {\sc pr} relation. But the claim that this relation is not $\Delta_0$ has not been proved in~\cite{rautenberg}. In fact, it has been shown in~\cite{calude} (see also~\cite{esbelinmore}) that this is not true: {\sl the graph of  ${\rm hex}$   is actually $\Delta_0$.}

\item[(3)] We read in the Abstract of \cite{esbelinmore},   ``The question of whether a given primitive recursive relation is rudimentary is in some cases difficult and related to several well-known open  questions in theoretical computer science''. Also, on page 130 of \cite{esbelinmore} we read, ``However, it is difficult to exhibit a {\em natural} arithmetical relation which can be proved not to be rudimentary'' (emphasize in the original) and that ``This paper is an attempt to systemize ... proving that various primitive recursive relations are rudimentary''. Later, on page 132 we read,  ``Hence, the main  way  of exhibiting a primitive recursive relation which is not rudimentary is to choose it in $\textswab{C}^3_\ast\setminus\textswab{C}^2_\ast$. Although it is true that infinitely many [such] relations exist, we know no natural example''.
    Here, by ``{\it natural}'' the authors mean  a relation ($\subseteq\mathbb{N}^k$) that the number-theorists use and work with.

\item[(4)] On page 85 of \cite{cook} after proving   that  ``Every $\Delta_0$ relation is primitive recursive'' as a {\bf Lemma}, we read,
``{\bf Remark}: The converse of the above lemma is false, as can be shown by a diagonal argument.
For those familiar with complexity theory, we can clarify things as follows. As
noted in the Side Remark above, all $\Delta_0$ relations can be recognized in linear space on a
Turing machine. On the other hand, it follows from the Ritchie-Cobham Theorem that all
relations recognizable in space bounded by a primitive recursive function of the input length
are primitive recursive. In particular, space ${\rm O}(n^2)$ relations are primitive recursive, and a
straightforward diagonal argument shows that there are relations recognizable in $n^2$ space
which are not recognizable in linear space, and hence are not $\Delta_0$ relations.''

The mentioned side-remark (that ``All $\Delta_0$ relations can be recognized in linear space on a Turing machine,
when input numbers are represented in binary notation'') has not been proved in  \cite{cook}. This was proved first by J. R.  Myhill in \cite{myhill}.
\end{itemize}

So, there should exist some {\sc pr} relations that are not $\Delta_0$.     In Section~\ref{sec:pr} we will show that a specific {\sc pr} relation is not $\Delta_0$, by a carefully detailed proof with little background in complexity theory or formal arithmetics. This relation may not look natural for number-theorists, but is sufficiently natural for logicians.

In the second part, Section \ref{sec:rep}, we will study some possible notions of representability of functions and relations in arithmetical theories and will compare  their strength with each other; we will provide a new proof for an old theorem which appears in a very few places with a much longer  proof. The theorem says that {\sl every weakly representable function is (strongly) representable}; this is usually proved by showing that ($\texttt{A}$) every weakly representable function is recursive, and ($\texttt{B}$) every recursive function is (strongly) representable. Our proof is direct  and  elementary.

\section{Rudimentarity vs. Primitive Recursivity}\label{sec:pr}
Let us  be given a fixed G\"odel coding $\alpha\mapsto\ulcorner\!\alpha\!\urcorner$, which is primitive recursive (as is usually presented in the literature).
Our example of a {\sc pr} relation that is not $\Delta_0$, uses an idea of Alfred Tarski; that the truth relation of arithmetical sentences is not arithmetically definable. Likewise, the truth of $\Delta_0$-sentences is not $\Delta_0$; but, as will be shown later, it is {\sc pr}.

\begin{definition}[$\Delta_0$-Satisfaction]
\label{def:sat}{\rm

\

\noindent
Let ${\tt Sat}_{\Delta_0}$ be the set of all the ordered pairs $(\ulcorner\!\theta(\vec{\nu})\!\urcorner,a)$, where $\theta(\vec{\nu})$ is a $\Delta_0$-formula with the shown free variables and $a\!\in\!\mathbb{N}$, such that $\mathbb{N}\vDash\theta(\vec{a})$; i.e., the sentence  resulted from  substituting $a$ for every free variable of $\theta$ is true (in the standard model of natural numbers).
}\hfill\ding{71} \end{definition}

In the other words, ${\tt Sat}_{\Delta_0}=
\{(\ulcorner\!\theta(\vec{\nu})\!\urcorner,a)
\mid \mathbb{N}\!\vDash\!\theta(\vec{a}) \; \& \;  \theta\!\in\!\Delta_0\}.$

\begin{theorem}[Non-Rudimentarity of $\Delta_0$-Satisfaction]\label{th:delta0}

\

\noindent
The relation ${\tt Sat}_{\Delta_0}(x,y)$ is not definable by any $\Delta_0$-formula.
\end{theorem}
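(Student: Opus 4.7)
The plan is to prove this by a diagonal (self-referential) argument of exactly the same shape as Tarski's undefinability-of-truth theorem, except that now everything stays inside $\Delta_0$, so no restriction on quantifier complexity is needed to close the argument. The crucial observations we will exploit are that (i) the class $\Delta_0$ is closed under substitution of terms (in particular of the same variable for two different variables) and under negation, and (ii) the Gödel coding $\alpha \mapsto \ulcorner\alpha\urcorner$ is effective, so concrete Gödel numbers of concrete formulas are well-defined natural numbers that we may mention freely.

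Assume, for contradiction, that there is a $\Delta_0$-formula $\sigma(x,y)$ such that for every $\Delta_0$-formula $\theta(\vec{\nu})$ and every $a\in\mathbb{N}$,
\[
   \mathbb{N}\vDash \sigma\!\bigl(\ulcorner\theta(\vec{\nu})\urcorner,\,a\bigr)
   \ \Longleftrightarrow\
   \mathbb{N}\vDash \theta(\vec a).
\]
First, I would form the diagonal $\Delta_0$-formula
$\psi(z)\;\equiv\;\neg\,\sigma(z,z),$
which has a single free variable $z$ and is bounded because $\sigma$ is. Next, let $n=\ulcorner\psi(z)\urcorner$. Applying the assumed satisfaction property to $\psi$ with $a=n$ yields
$\mathbb{N}\vDash \sigma(n,n)\Longleftrightarrow \mathbb{N}\vDash \psi(n).$
On the other hand, by the very definition of $\psi$,
$\mathbb{N}\vDash\psi(n)\Longleftrightarrow \mathbb{N}\not\vDash\sigma(n,n).$
Chaining these two equivalences gives the liar contradiction
$\mathbb{N}\vDash\sigma(n,n)\Longleftrightarrow \mathbb{N}\not\vDash\sigma(n,n),$
so no such $\sigma$ can exist.

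There is essentially no technical obstacle beyond the bookkeeping of the diagonalization. The one point that deserves care is verifying that $\psi(z)$ genuinely belongs to $\Delta_0$: this follows because $\sigma(z,z)$ is obtained from the $\Delta_0$-formula $\sigma(x,y)$ by the admissible operation of substituting the variable $z$ for both $x$ and $y$ (which introduces no new unbounded quantifier), and then $\neg$ is one of the logical connectives explicitly permitted in Definition~\ref{def:delta0}. Thus the assumed $\Delta_0$-definability of ${\tt Sat}_{\Delta_0}$ is used exactly once, to guarantee that the diagonal formula $\psi$ sits at the same complexity level at which we are trying to define satisfaction, and it is this closure of $\Delta_0$ under substitution and negation — combined with the syntactic presence of a Gödel number $n=\ulcorner\psi(z)\urcorner$ — that powers the contradiction. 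The argument will parallel the standard proof of Tarski's theorem, but confined to the bounded fragment.
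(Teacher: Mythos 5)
Your proof is correct and is essentially identical to the paper's own argument: both assume a $\Delta_0$-formula defines ${\tt Sat}_{\Delta_0}$, diagonalize with the negated self-application $\neg\sigma(z,z)$, apply the satisfaction property at its own G\"odel number, and derive the liar contradiction. Your added remark on why the diagonal formula stays in $\Delta_0$ is a useful (if minor) elaboration of what the paper states only parenthetically.
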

\begin{proof}

\noindent
If a $\Delta_0$-formula such as  $\boldsymbol\varsigma(x,y)$ defines  the relation
 ${\tt Sat}_{\Delta_0}$, then for the formula $\theta(x)\!=\!\boldsymbol\neg\boldsymbol\varsigma(x,x)$
 (which is $\Delta_0$) and number $m\!=\!\ulcorner\!\theta(x)\!\urcorner$, we have $\mathbb{N}\vDash\theta(m)\!\equiv\!{\tt Sat}_{\Delta_0}(\ulcorner\theta(x)\urcorner,m)
\!\equiv\!\boldsymbol\varsigma(m,m)
\!\equiv\!\boldsymbol\neg\theta(m)$, a  contradiction!
\end{proof}

In the rest of this section, we show that ${\tt Sat}_{\Delta_0}$ is a {\sc pr} relation.
This can already be inferred from the results of \cite{lessan}; see also \cite[Definition 4.1.3 and Lemma 4.1.4]{weak} and \cite[Theorem 2]{parisdimitra} and \cite[Corollary 5.5]{hp}. All of them use advanced arguments that cannot be mentioned in more elementary texts like \cite{cook,hedman,rautenberg}. Our aim here is to provide an elementary proof for primitive recursivity of
${\tt Sat}_{\Delta_0}$ in such a way that  it can be used, along with Theorem~\ref{th:delta0},  in textbooks for clarifying the status of {\sc pr} vs.  $\Delta_0$ relations.
\begin{remark}[On G\"odel Coding]\label{rem:coding}{\rm

\

\noindent
We can assume that the set of the G\"odel codes of the variables is definable by a $\Delta_0$-formula; for example we can keep even numbers $2,4,6,\cdots$ for coding the variables $v_0,v_1,v_2,\cdots$ respectively, and then code the rest of the language (propositional connectives, quantifiers, parentheses and function and relation symbols) by odd numbers. As a result of this way of coding,    ${\tt var}(x)\!\equiv\!\exists y\!\leqslant\!x\,(y\!=\!2x\!+\!2)$  is a $\Delta_0$-formula that defines the variables. Other syntactical notions of {\em terms, formulas, sentences, bounded sentences, proofs}, etc. can be shown to be {\sc pr} as usual (see e.g. \cite{hp,hedman,mendelson,rautenberg}).
Let $\mathfrak{p}_0,\mathfrak{p}_1,\mathfrak{p}_2,\cdots$ be the sequence of all prime numbers ($2,3,5,\cdots$). Let us code the sequence $\langle\alpha_0,\alpha_1,\cdots,\alpha_k\rangle$ by the number $\prod_{i\leqslant k}\mathfrak{p}_i^{\alpha_i+1}$.  Let us note that this way, the code of any such sequence will be non-greater than $\mathfrak{p}_k^{kA}$, where $A$ is any number greater than all $\alpha_i$'s. Also let us recall that the functions $i\mapsto\mathfrak{p}_i$  and $(k,A)\mapsto\mathfrak{p}_k^{kA}$ are both {\sc pr}  (see e.g. \cite{hedman,mendelson,rautenberg}).
}\hfill\ding{71}
\end{remark}

\begin{definition}[Terms, Bounded Formulas, Valuations, etc.]\label{def:prthings}{\rm

\

\noindent
For a fixed G\"odel coding, let the relation
\begin{itemize}
\item ${\tt var}(x)$ hold, when ``$x$ is (the G\"odel code of) a {\em variable}''.
\item ${\tt trm}(x)$ hold, when ``$x$ is (the G\"odel code of) a {\em term}''.
\item ${\tt atm}(x)$ hold, when ``$x$ is (the G\"odel code of) an {\em atomic formula}''.
\item ${\tt fml}_{\Delta_0}(x)$ hold, when ``$x$ is (the G\"odel code of) a {\em $\Delta_0$-formula}''.
\item ${\tt val}(x,y,z)$ hold, when ``$x$ is (the G\"odel code of) a term with the free variables $\langle\nu_0,\cdots,\nu_\ell\rangle$, $y$ is (the G\"odel code of) a sequence of numbers $\langle a_0,\cdots,a_\ell\rangle$, and $z$ is the {\em value} of the term $x$ when each $\nu_i$ is substituted with $a_i$, for $i\!\leqslant\!\ell$''.
\hfill\ding{71}
\end{itemize}
} \end{definition}

\begin{lemma}[${\tt var}$, ${\tt trm}$, ${\tt fml}_{\Delta_0}$ and ${\tt val}$ are {\sc pr}]\label{lem:allpr}

\

\noindent
The relations ${\tt var}$, ${\tt trm}$, ${\tt atm}$, ${\tt fml}_{\Delta_0}$ and ${\tt val}$ are {\sc pr}.
\end{lemma}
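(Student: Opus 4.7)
The plan is to dispatch the five relations in order, leaning on the closure of the {\sc pr} class under composition, Boolean operations, and bounded quantification (already recorded in Section~\ref{sec:intro}). For ${\tt var}$, Remark~\ref{rem:coding} already furnishes an explicit $\Delta_0$ definition, so ${\tt var}$ is {\sc pr} with no further work.

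For the three purely syntactic relations ${\tt trm}$, ${\tt atm}$, and ${\tt fml}_{\Delta_0}$, I would use the standard device of \emph{construction sequences}, coded by prime-power products. A code $x$ belongs to ${\tt trm}$ iff there exists a finite sequence $\langle t_0,\ldots,t_n\rangle$ with $t_n\!=\!x$ such that, for every $i\!\leqslant\!n$, either ${\tt var}(t_i)$ holds, $t_i$ codes a constant ($0$ or $1$), or $t_i\!=\!\ulcorner F(t_{j_1},t_{j_2})\urcorner$ for some $j_1,j_2\!<\!i$ and some function symbol $F\!\in\!\{+,\times\}$. The number of steps $n$ is bounded by a {\sc pr} function of $x$ (at most the number of symbols in the expression that $x$ decodes to), and each $t_i$, being a sub-expression, has code bounded by $x$; by the prime-power encoding recalled in Remark~\ref{rem:coding}, the code of the whole sequence is therefore bounded by some {\sc pr} function $B(x)$. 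Since checking that a candidate sequence conforms to the clauses above is a {\sc pr} Boolean test on its entries,
\[{\tt trm}(x)\;\Longleftrightarrow\;\exists s\!\leqslant\!B(x)\;\bigl[s\text{ codes a valid term-construction sequence ending in }x\bigr],\]
which is {\sc pr} by closure under bounded existential quantification. The relation ${\tt atm}(x)$ is now a {\sc pr} Boolean test on ${\tt trm}$ applied to the sub-codes of $x$, and ${\tt fml}_{\Delta_0}(x)$ is handled identically to ${\tt trm}$, now allowing construction steps from atomic formulas by the propositional connectives and by the bounded-quantifier patterns $\ulcorner\forall v\!\leqslant\!t\,\varphi\urcorner$ and $\ulcorner\exists v\!\leqslant\!t\,\varphi\urcorner$ (requiring ${\tt var}(v)$, ${\tt trm}(t)$, and the already-built $\varphi$).

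The delicate case is ${\tt val}(x,y,z)$, which I would attack by the same construction-sequence idea applied to a \emph{value-assignment sequence} $\langle(s_0,v_0),\ldots,(s_n,v_n)\rangle$ listing every subterm $s_i$ of $x$ together with its intended value $v_i$ under $y$, subject to the clauses $v_i\!=\!y_j$ when $s_i$ codes the $j$-th variable, $v_i\!=\!0$ or $v_i\!=\!1$ for the two constants, and $v_i\!=\!v_{j_1}\!+\!v_{j_2}$ or $v_i\!=\!v_{j_1}\!\cdot\!v_{j_2}$ according to the outermost function symbol of $s_i$. The main obstacle here is bounding the values $v_i$: unlike the syntactic parts $s_i$, they are not sub-expressions of $x$, and multiplication lets them grow rapidly. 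A routine induction on the build-up of $x$ nevertheless shows $v_i\!\leqslant\!(M\!+\!2)^{2^{|x|}}$, where $M$ is the largest entry of $y$ and $|x|$ is the syntactic length of $x$; as this bound is {\sc pr} in $(x,y)$, the code of the entire value-assignment sequence is bounded by some {\sc pr} function $B'(x,y)$, so
\[{\tt val}(x,y,z)\;\Longleftrightarrow\;\exists s\!\leqslant\!B'(x,y)\;\bigl[s\text{ is a valid value-assignment sequence for }x\text{ under }y\text{ assigning }z\text{ to }x\bigr]\]
is {\sc pr}. (Equivalently, one can define the value function $V(x,y)$ directly by course-of-values recursion on $x$, which the {\sc pr} class is closed under, and then set ${\tt val}(x,y,z)\!\Longleftrightarrow\!V(x,y)\!=\!z$.)
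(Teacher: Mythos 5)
Your proposal is correct and follows essentially the same route as the paper: building/construction sequences whose codes are bounded by primitive recursive functions of the inputs, verified entry-by-entry by a {\sc pr} (indeed $\Delta_0$) clause check, and then closed off with a bounded existential quantifier, with ${\tt var}$ read off from Remark~\ref{rem:coding}. The only noticeable divergence is in ${\tt val}$, where you bound the intermediate subterm values by $(M+2)^{2^{|x|}}$ in terms of $x$ and $y$ alone while the paper bounds the parallel value sequence using the given output $z$; this is a difference of bookkeeping, not of method.
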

\begin{proof}

\noindent
We already noted (in Remark~\ref{rem:coding}) that the ${\tt var}$ relation can even be $\Delta_0$   (and so it is a {\sc pr} relation)  by a modest convention on coding. There is also a $\Delta_0$ relation ${\tt seq}(x)$ which holds of $x$ when $x$ is (the G\"odel code of) a sequence. Let $\ell en(x)$ denote the length of $x$ and $[x]_i$, for each $i\!<\!\ell en(x)$, denote the $i$-th element of $x$. Thus, if ${\tt seq}(x)$ holds, then $x$ codes the sequence $\langle [x]_0,[x]_1,\cdots,[x]_{\ell en(x)-1}\rangle$. Let us recall that   $x\mapsto\ell en(x)$ and $(i,x)\mapsto [x]_i$ are both {\sc pr}
functions. Let $y\!=\!\ell ast(x)$ abbreviate $y\!=\![x]_{\ell en(x)-1}$.

\noindent $^{_\bullet}$ Let ${\tt trmseq}(x)$ be the following $\Delta_0$ relation:

${\tt seq}(x)\wedge\forall i\!<\!\ell en(x)\Big[ [x]_i\!=\!\ulcorner 0 \urcorner\vee
[x]_i\!=\!\ulcorner 1 \urcorner \vee {\tt var}([x]_i) \; \vee$

\hfill $\exists j,k\!<\!i\Big([x]_i\!=\!\ulcorner\!([x]_j\!+\![x]_k)\!\urcorner \vee
[x]_i\!=\ulcorner\!([x]_j\!\times\![x]_k)\!\urcorner\Big)\Big]$.

\noindent Now, ${\tt trm}(x)$ can be written  as $\exists s\!\leqslant\! \mathfrak{p}_x^{(x+1)^2}{\tt trmseq}(s)\wedge\ell ast(s)\!=\!x$; noting that the building sequence of a term $x$ has length at most $x$ and all the elements of that sequence are non-greater than $x$. So, ${\tt trm}(x)$ is  {\sc pr}.

\noindent $^{_\bullet}$  That ${\tt atm}(x)$ is a {\sc pr} relation, follows from the following:

   ${\tt atm}(x)\!\equiv\!\exists u,v\!<\!x\big[{\tt trm}(u)\wedge{\tt trm}(v)\wedge \big(x\!=\!\ulcorner\!(u\!=\!v)\!\urcorner \vee
x\!=\!\ulcorner\!(u\!\leqslant\!v)\!\urcorner\big)\big]$.

\noindent $^{_\bullet}$  Without loss of generality we can assume that the propositional connectives are only $\neg$ and $\rightarrow$ and the only quantifier is $\forall$. Now, the following $\Delta_0$-formula defines the building sequence of a bounded formula:

${\tt fml}_{\Delta_0}{\tt seq}(x)\!\equiv\!{\tt seq}(x)\wedge\forall i\!<\!\ell en(x)\Big[ {\tt atm}([x]_i) \vee$

\hspace{6.25em} $\exists j,k\!<\!i\Big(
[x]_i\!=\!\ulcorner\!(\neg[x]_j)\!\urcorner \vee
[x]_i\!=\!\ulcorner\!([x]_j\!\rightarrow\![x]_k)\!\urcorner \vee$

\hspace{6.25em} $\exists v,t\!<\!x\big[{\tt var}(v)\wedge {\tt trm}(t)\wedge
[x]_i\!=\!\ulcorner\!(\forall v\!\leqslant\!t)[x]_j\!\urcorner
\big]\Big)\Big]$.

\noindent
So, ${\tt fml}_{\Delta_0}(x)\!\equiv\!\exists s\!\leqslant\!\mathfrak{p}_x^{(x+1)^2}{\tt fml}_{\Delta_0}{\tt seq}(s)\wedge\ell ast(s)\!=\!x$ is a {\sc pr} relation.

\noindent $^{_\bullet}$  Let ${\tt valseq}(y,s,t)$ be the following $\Delta_0$ relation:

\noindent
${\tt seq}(y)\wedge{\tt termseq}(s)\wedge{\tt seq}(t)\wedge\ell en(t)\!=\!\ell en(s)\,\wedge \forall i\!<\!\ell en(s) \Big[$

\noindent
$\big([s]_i\!=\!\ulcorner 0 \urcorner \wedge [t]_i\!=\!0\big)\vee \big([s]_i\!=\!\ulcorner 1 \urcorner \wedge [t]_i\!=\!1\big)\vee\big({\tt var}([s]_i)\wedge [t]_i\!=\![y]_i\big)\vee$

\noindent
$\exists j,k\!<\!i\big[
\big([s]_i\!=\!\ulcorner\!([s]_j\!+\![s]_k)\!\urcorner
\wedge [t]_i\!=\![t]_j\!+\![t]_k\big)
\vee$

\hfill $
\big([s]_i\!=\!\ulcorner\!([s]_j\!\times\![s]_k)\!\urcorner
\wedge [t]_i\!=\![t]_j\!\cdot\![t]_k\big)
\big]\Big],$

\noindent
which states that $y,t$ are (the G\"odel code of)   sequences (of numbers) and $s$ is (the G\"odel code of)   a building sequence of a term such that $t$ is the result of substituting the variables of $s$ with the corresponding elements of $y$.
Finally, ${\tt val}(x,y,z)$ is {\sc pr} since it is equivalent with

$\exists s\!\leqslant\!\mathfrak{p}_x^{(x+1)^2}\exists t\!\leqslant\!\mathfrak{p}_z^{(z+1)^2}  {\tt valseq}(y,s,t)\wedge \ell ast(s)\!=\!x\wedge\ell ast(t)\!=\!z$.
\end{proof}

\begin{remark}[${\tt Sat}_{\Delta_0}$ In the Border of {\sc pr} and $\Delta_0$]\label{rem:rem}{\rm

\

\noindent
The main idea of the proofs of Lemma~\ref{lem:allpr} and Theorem~\ref{th:prthings} are from \cite[Chapter 9]{kay}.  Actually, by the techniques of \cite[Chapter V]{hp} one can show that all the relations ${\tt var}(x)$, ${\tt trm}(x)$, ${\tt atm}(x)$, ${\tt fml}_{\Delta_0}(x)$ and ${\tt val}(x,y,z)$ can be $\Delta_0$, under a suitable G\"odel coding. In Theorem~\ref{th:prthings} we will show that ${\tt Sat}_{\Delta_0}(x,y)$ is a {\sc pr} relation, which, by Theorem~\ref{th:delta0}, cannot be $\Delta_0$ under any G\"odel coding. We will see in the proof of Theorem~\ref{th:prthings} that ${\tt Sat}_{\Delta_0}$ is definable by the relations ${\tt var}$, ${\tt trm}$, ${\tt atm}$, ${\tt fml}_{\Delta_0}$ and ${\tt val}$. So, we have a boundary  result here: the {\sc pr} relations ${\tt var}(x)$, ${\tt trm}(x)$, ${\tt atm}(x)$, ${\tt fml}_{\Delta_0}(x)$ and ${\tt val}(x,y,z)$  all can be    $\Delta_0$ under some G\"odel coding, while the {\sc pr} relation ${\tt Sat}_{\Delta_0}(x,y)$    {\em can never be $\Delta_0$}.
}\hfill\ding{71}
\end{remark}

\begin{theorem}[${\tt Sat}_{\Delta_0}$ is a {\sc pr} Relation]\label{th:prthings}

\

\noindent
The relation ${\tt Sat}_{\Delta_0}(x,y)$ is  {\sc pr}.
\end{theorem}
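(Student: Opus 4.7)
The plan is to mimic the sequence-based constructions of Lemma~\ref{lem:allpr} and introduce a {\sc pr} relation ${\tt satseq}(s)$ that certifies $s$ as the code of a sequence of triples $\langle(c_0,y_0,b_0),\ldots,(c_k,y_k,b_k)\rangle$ in which each $b_i\in\{0,1\}$ is obtained from earlier entries of $s$ by a Tarski-style clause determined by the syntactic shape of $c_i$. Once ${\tt satseq}$ is {\sc pr}, the equivalence
\[
{\tt Sat}_{\Delta_0}(x,y)\;\Longleftrightarrow\;\exists s\!\leqslant\! N(x,y)\,\bigl[{\tt satseq}(s)\,\wedge\,(x,y,1)\text{ appears in }s\bigr]
\]
for a suitable {\sc pr} bound $N(x,y)$ will witness that ${\tt Sat}_{\Delta_0}$ is {\sc pr} by the closure of the {\sc pr} class under conjunction and bounded existential quantification.

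The clauses that make ${\tt satseq}$ a {\sc pr} relation will be spelled out using the {\sc pr} predicates ${\tt atm}$, ${\tt fml}_{\Delta_0}$, and ${\tt val}$ from Lemma~\ref{lem:allpr}. Atomic entries with $c_i=\ulcorner r\!=\!q\urcorner$ or $\ulcorner r\!\leqslant\! q\urcorner$ are handled with two calls to ${\tt val}$ at the assignment $y_i$; entries for $\ulcorner\neg\psi\urcorner$ or $\ulcorner\psi\!\rightarrow\!\theta\urcorner$ demand earlier entries on the subformulas at the same $y_i$ with consistent Boolean values. The delicate clause is the bounded quantifier: for $c_i=\ulcorner\forall v\!\leqslant\! t\,\psi\urcorner$ we require $b_i\!=\!1$ exactly when, for every $a$ bounded by the value of $t$ at $y_i$ (supplied by ${\tt val}$), an earlier triple $(\ulcorner\psi\urcorner, y_i[v\!\mapsto\! a], 1)$ appears in $s$; here $y_i[v\!\mapsto\! a]$, the assignment obtained from $y_i$ by overwriting the slot for the variable $v$ with the number $a$, is plainly a {\sc pr} function of $y_i,v,a$. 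Since $a$ and the search over earlier indices are both bounded, each clause is a $\Delta_0$ condition over {\sc pr} predicates, so ${\tt satseq}$ itself is {\sc pr}.

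The main obstacle is to exhibit an explicit {\sc pr} bound $N(x,y)$ guaranteeing that, whenever $\mathbb{N}\vDash\varphi(y)$ with $\ulcorner\varphi\urcorner\!=\!x$, some certifying satisfaction sequence lies below $N$ (together with the converse soundness statement, verified by induction on the position of a triple in $s$, that every $(c,y',1)$ recorded by ${\tt satseq}$ is genuinely true). The point is that the only modified assignments called for in evaluating $\varphi$ at $y$ have the form $y'[v\!\mapsto\! a]$ with $a$ no larger than the value under $y'$ of a term $t$ syntactically occurring in $\varphi$, and $y'$ is itself a previously introduced modification. An induction on the building sequence of $\varphi$ produces a {\sc pr} function $M(x,y)$ dominating every numerical datum that must be recorded in a minimal satisfaction sequence, whence one takes a bound of the shape $N(x,y)=\mathfrak{p}_{M(x,y)}^{M(x,y)^{2}}$, in the same spirit as the bounds used for ${\tt trm}$ and ${\tt fml}_{\Delta_0}$ in Lemma~\ref{lem:allpr}. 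Plugging this $N$ into the displayed equivalence completes the proof.
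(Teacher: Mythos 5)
Your proposal follows essentially the same route as the paper's proof: a {\sc pr} ``satisfaction sequence'' relation assembled from ${\tt trm}$, ${\tt atm}$, ${\tt fml}_{\Delta_0}$ and ${\tt val}$ via Tarski-style clauses on triples (formula, assignment, truth value), with the bounded quantifier handled by overwriting one slot of the assignment and bounding the witnesses through ${\tt val}$, and ${\tt Sat}_{\Delta_0}$ then obtained by a bounded existential quantification under an explicit {\sc pr} bound of the $\mathfrak{p}_k^{kA}$ type. The one detail to tighten is your $\forall v\!\leqslant\!t$ clause: you must additionally require that \emph{every} instance $a$ up to the value of $t$ appears earlier in the sequence with \emph{some} truth value, not merely that $b_i\!=\!1$ iff all instances appear with value $1$ --- otherwise a sequence could unsoundly record $b_i\!=\!0$ for a true bounded universal simply by omitting instances, and a negation clause would then propagate this to a false $1$-entry; the paper's definition of ${\tt sat}_{\Delta_0}{\tt seq}$ includes exactly this extra conjunct.
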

\begin{proof}

\noindent
Define the relation ${\tt sat}_{\Delta_0}{\tt seq}(s,t)$ by  ``$s$ is a building sequence of a $\Delta_0$-formula, and $t$ is a sequence of triples $\langle i,z,w\rangle$ in which  $i\!<\!\ell en(s)$  and $w\!\leqslant\!1$ is a truth value ($1$ for truth and $0$ for falsity) of the formula $[s]_i$ when the variables $v_0,v_1,\cdots$ are interpreted by $[z]_0,[z]_1,\cdots$ respectively''. Let  $z[r/k]$ denote  the sequence resulted from $z$ by substituting its $k$-th element with $r$. The function $z,r,k\mapsto z[r/k]$ is {\sc pr}, and when ${\tt val}(u,z,x)$ holds, then we can have ${\tt val}(u,z,x)$ for some $x\!\leqslant\!\mathfrak{p}_u^{z^u+1}$, since the value of a term $u$ when its free variables are substituted by the elements of $z$ is non-greater than $\mathfrak{p}_u^{z^u+1}$.
Now, ${\tt sat}_{\Delta_0}{\tt seq}(s,t)$ is {\sc pr} since it is defined by:

\noindent
${\tt fml}_{\Delta_0}{\tt seq}(s)\wedge {\tt seq}(t)\wedge \forall l\!<\!\ell en(t)\,\exists i,z,w\!\leqslant\!t$

\centerline{
$\Big[[t]_l\!=\!\langle i,z,w\rangle \wedge i\!<\!\ell en(s) \wedge w\!\leqslant\!1\,\wedge$}

$\Big(\big[\exists u,v\!\leqslant\!s\big({\tt trm}(u)\wedge{\tt trm}(v)\wedge [s]_i\!=\!\ulcorner\!(u\!=\!v)\!\urcorner\,\wedge$

\;
$[w=1\leftrightarrow\exists x\!\leqslant\!\mathfrak{p}_{u+v}^{(z^{u+v}+1)^2}
{\tt val}(u,z,x)\wedge{\tt val}(v,z,x)]\big)\big] \vee $

$\big[\exists u,v\!\leqslant\!s\big({\tt trm}(u)\wedge{\tt trm}(v)\wedge [s]_i\!=\!\ulcorner\!(u\!\leqslant\!v)\!\urcorner\,\wedge$

\;
$[w=1\leftrightarrow\exists x,y\!\leqslant\!\mathfrak{p}_{u+v}^{(z^{u+v}+1)^2}
{\tt val}(u,z,x)\wedge{\tt val}(v,z,y)\wedge x\!\leqslant\!y]\big)\big] \vee $

$\big[\exists j\!<\!i\big([s]_i\!=\!\ulcorner\!(\neg[s]_j)\!\urcorner\,\wedge\exists p\!<\!l\exists w'\!\leqslant\!1
([t]_p\!=\!\langle j,z,w'\rangle\;\wedge$

 \;
$[w\!=\!1\leftrightarrow w'\!=\!0])\big)\big] \vee $

$\big[\exists j,k\!<\!i\big([s]_i\!=\!\ulcorner\!([s]_j\rightarrow [s]_k)\!\urcorner\,\wedge\exists p,q\!<\!l\exists w',w''\!\leqslant\!1
$

\;
$([t]_p\!=\!\langle j,z,w'\rangle\wedge  [t]_q\!=\!\langle k,z,w''\rangle\wedge [w\!=\!1\leftrightarrow w'\!=\!0\vee w''\!=\!1])\big)\big] \vee $

$\big[\exists j\!<\!i\exists u,v\!<\!s\big({\tt trm}(u)\wedge {\tt var}(v)\wedge [s]_i\!=\!\ulcorner\!(\forall v\!\leqslant\!u)[s]_j\!\urcorner\wedge\exists x\!\leqslant\!\mathfrak{p}_u^{z^u+1}$

\;
$[{\tt val}(u,z,x)\wedge\forall r\!\leqslant\!x
\exists p\!<\!l\exists w'\!\leqslant\!1([t]_p\!=\!\langle j,z[r/\ulcorner\!v\!\urcorner],w' \rangle)]  \wedge$

\hfill
$[w\!=\!1\leftrightarrow \forall r\!\leqslant\!x
\exists p\!<\!l\exists w'\!\leqslant\!1([t]_p\!=\!\langle j,z[r/\ulcorner\!v\!\urcorner],1 \rangle]\big)\big]
 \Big)\Big].$

As a result, ${\tt Sat}_{\Delta_0}(x,y)$ is {\sc pr} too, since it be written  as

$\exists s\!\leqslant\!\mathfrak{p}_x^{(x+1)^2}\exists t\!\leqslant\!\mathfrak{p}_{x^2}^{2^{\mathfrak{p}_x^{(x+1)^2}}
\cdot 3^{\mathfrak{p}_x^{\mathfrak{p}_x^{(y+1)^2}}}\cdot 5}\Big[{\tt sat}_{\Delta_0}{\tt seq}(s,t)\wedge\ell ast(s)\!=\!x\wedge\ell ast(t)\!=\!\langle\ell en(s)\!-\!1,y,1\rangle\Big]$.

Note that we took $\neg,\rightarrow$ and  $\forall$ as the only logical connectives and  we coded $\langle i,z,w\rangle$ as $2^i\cdot 3^z\cdot 5^w$ which imply the desired {\sc pr} bounds.
\end{proof}

\section{Representability in Arithmetical Theories}\label{sec:rep}
A (most) natural definition for representability of a relation on the natural numbers  in a theory, whose language contains terms $\overline{n}$ indicating each natural number $n\!\in\!\mathbb{N}$,  is the following:

\begin{definition}[Weak Representability of Relations]\label{def:relwrep}{\rm

\

\noindent
A relation $R\subseteq\mathbb{N}$ is {\em weakly representable} in a theory $T$, if for some formula $\varphi(x)$ the equivalence
{$R(n)\iff T\vdash\varphi(\overline{n})$}
 holds for every $n\in\mathbb{N}$.
}\hfill\ding{71} \end{definition}

However, the following stronger definition is usually used in the literature on the incompleteness theorem:

\begin{definition}[Representability of Relations]\label{def:relrep}{\rm

\

\noindent
A relation $R\subseteq\mathbb{N}$ is  {\em representable} in a theory $T$, if for some formula $\varphi(x)$ the implications
 $R(n) \Longrightarrow T\vdash\varphi(\overline{n})$  and
 $\neg R(n)\Longrightarrow T\vdash\neg\varphi(\overline{n})$
 hold for every $n\in\mathbb{N}$.
}\hfill\ding{71}\end{definition}

Trivially, representability of a relation in a consistent theory implies its weaker representability in that theory. The converse does not hold, in the sense that a relation may be weakly representable in a theory without being representable (cf. \cite[Theorem II.2.16]{odifreddi}):

\begin{remark}[On the Representability of {\sc Provability}]\label{rem:prov}{\rm
\

\noindent
Let ${\rm Prov}_{\textsl{\textsf{Q}}}(x)$ be a provability predicate for Robinson Arithmetic $\textsl{\textsf{Q}}$; then for every sentence  $\phi$, we have  $\textsl{\textsf{Q}}\vdash\phi$ if and only if  $\textsl{\textsf{Q}}\vdash{\rm Prov}_{\textsl{\textsf{Q}}}(\ulcorner\phi\urcorner)$, since ${\rm Prov}_{\textsl{\textsf{Q}}}$ is a $\Sigma_1$-formula and $\textsl{\textsf{Q}}$ is $\Sigma_1$-complete and sound. On the other hand, there can be no formula $\Psi(x)$ such that for any formula $\phi$:

$^{_\bullet}\,$ if ${\rm Prov}_{\textsl{\textsf{Q}}}(\ulcorner\!\phi\!\urcorner)$, then $\textsl{\textsf{Q}}\vdash\Psi(\ulcorner\phi\urcorner)$;  and

$^{_\bullet}\,$ if $\neg{\rm Prov}_{\textsl{\textsf{Q}}}(\ulcorner\!\phi\!\urcorner)$, then $\textsl{\textsf{Q}}\vdash\neg\Psi(\ulcorner\phi\urcorner)$.

\noindent
Since, otherwise, provability in $\textsl{\textsf{Q}}$      would be decidable: for a given formula $\phi$ by running an exhaustive  proof search algorithm in $\textsl{\textsf{Q}}$ for the formulas $\Psi(\ulcorner\phi\urcorner)$ and $\neg\Psi(\ulcorner\phi\urcorner)$ in parallel, one could decide if $\textsl{\textsf{Q}}\vdash\phi$ (exactly when $\textsl{\textsf{Q}}\vdash\Psi(\ulcorner\phi\urcorner)$) or $\textsl{\textsf{Q}}\nvdash\phi$ (exactly when $\textsl{\textsf{Q}}\vdash\neg\Psi(\ulcorner\phi\urcorner)$) holds; and this is a contradiction (with Alonzo Church's Theorem).
}\hfill\ding{71}\end{remark}

For (total) functions we can have four different definitions for representability in theories (originated from \cite{tmr}).

\begin{definition}[Weakly Representable Functions]\label{def:wr}{\rm

\

\noindent
A function $f\colon\mathbb{N}\rightarrow\mathbb{N}$ is {\em weakly representable} in a theory $T$, if for some formula $\varphi(x,y)$ we have

$(1)\,$ if $f(n)=m$, then $T\vdash\varphi(\overline{n},\overline{m})$; and

$(2)\,$ if $f(n)\neq m$, then $T\not\vdash\varphi(\overline{n},\overline{m})$;

\noindent for every $n,m\in\mathbb{N}$.
}\hfill\ding{71}\end{definition}

\begin{definition}[Representable Functions]\label{def:r}{\rm

\

\noindent
A function $f\colon\mathbb{N}\rightarrow\mathbb{N}$ is {\em representable} in a theory $T$, if for some formula $\psi(x,y)$ we have

$(1)\,$ if $f(n)=m$, then $T\vdash\psi(\overline{n},\overline{m})$; and

$(2)\,$ if $f(n)\neq m$, then $T\vdash\neg\psi(\overline{n},\overline{m})$;

\noindent
for every $n,m\in\mathbb{N}$.
}\hfill\ding{71}\end{definition}

\begin{definition}[Strongly Representable Functions]\label{def:sr}{\rm

\

\noindent
A function $f\colon\mathbb{N}\rightarrow\mathbb{N}$ is {\em strongly representable} in a theory $T$, if for some formula $\theta(x,y)$ we have

$(1)\,$  $T\vdash\theta(\overline{n},\overline{f(n)})$; and

$(2)\,  T\vdash\forall y,z\big(\theta(\overline{n},y)
\wedge\theta(\overline{n},z)\rightarrow y=z\big)$;

\noindent for every $n\in\mathbb{N}$.
}\hfill\ding{71}\end{definition}

\begin{definition}[Provably Total Functions]\label{def:pt}{\rm

\

\noindent
A function $f\colon\mathbb{N}\rightarrow\mathbb{N}$ is {\em provably total} in a theory $T$, if for some formula $\eta(x,y)$ we have

$(1)\,$ $T\vdash\eta(\overline{n},\overline{f(n)})$; and

$(2)\,  T\vdash\forall x\exists y\big(\eta(x,y)\wedge\forall z\big[\eta(x,z)\rightarrow y=z\big]\big)$;

\noindent for every $n\in\mathbb{N}$.
}\hfill\ding{71}\end{definition}

Indeed, these definitions get stronger from top to bottom:
If $T$ is consistent and can prove $\overline{i}\!\neq\!\overline{j}$ for every distinct $i,j\!\in\!\mathbb{N}$, then every provably total function is strongly representable, and every strongly representable function is representable, and every representable function is weakly representable in $T$ with the same formula.
 It is a folklore result that representability implies strong representability (cf. \cite[Proposition I.3.3]{odifreddi}):

\begin{lemma}[Representability  $\Longrightarrow$ Strong Representability]\label{lem:rsr}

\

\noindent
In a theory $T$ which can prove the sentences  $\forall y(y\!<\!\overline{n} \,\vee\,   y\!=\!\overline{n} \,\vee\,  \overline{n}\!<\!y)$, $\forall y (y\not<0)$  and $\forall y (y\!<\!\overline{n\!+\!1} \leftrightarrow y\!=\!\overline{0} \,\vee\, \cdots \,\vee\, y\!=\!\overline{n})$, for all $n\!\in\!\mathbb{N}$, representability of a function implies its strong representability.
\end{lemma}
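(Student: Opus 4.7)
The plan is to use the classical ``least witness'' trick: given a formula $\varphi(x,y)$ that represents $f$ in the sense of Definition~\ref{def:r}, define
\[
\theta(x,y) \;\equiv\; \varphi(x,y) \wedge \forall z\big(z<y \rightarrow \neg\varphi(x,z)\big),
\]
which picks out the least $y$ witnessing $\varphi(x,y)$. I claim this $\theta$ strongly represents $f$ in $T$, and I verify the two clauses of Definition~\ref{def:sr} separately for each fixed $n\in\mathbb{N}$, writing $m=f(n)$ throughout.

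For clause (1), representability of $\varphi$ gives $T\vdash\varphi(\overline{n},\overline{m})$ and, for every $k<m$, also $T\vdash\neg\varphi(\overline{n},\overline{k})$. The hypothesis $T\vdash\forall y(y\not<\overline{0})$ disposes of the degenerate case $m=0$; for $m>0$ the enumeration axiom $T\vdash\forall y\big(y<\overline{m}\leftrightarrow y=\overline{0}\vee\cdots\vee y=\overline{m-1}\big)$ allows one to splice the finitely many instances $\neg\varphi(\overline{n},\overline{k})$ into a single $T$-proof of $\forall z<\overline{m}\,\neg\varphi(\overline{n},z)$, and hence $T\vdash\theta(\overline{n},\overline{m})$. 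For clause (2), I will actually prove the stronger statement $T\vdash\forall y\big(\theta(\overline{n},y)\rightarrow y=\overline{m}\big)$, from which the required uniqueness is immediate. Arguing inside $T$: assume $\theta(\overline{n},y)$, so that both $\varphi(\overline{n},y)$ and $\forall w<y\,\neg\varphi(\overline{n},w)$ hold. By the trichotomy hypothesis $\forall y\big(y<\overline{m}\vee y=\overline{m}\vee\overline{m}<y\big)$, split into three cases: the case $y<\overline{m}$ contradicts $\forall z<\overline{m}\,\neg\varphi(\overline{n},z)$ established in clause (1); the case $\overline{m}<y$ contradicts $\varphi(\overline{n},\overline{m})$ via instantiation of $\forall w<y\,\neg\varphi(\overline{n},w)$ at $\overline{m}$; hence $y=\overline{m}$.

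The main obstacle is bookkeeping rather than conceptual. One must keep the degenerate case $m=0$ (where the enumeration axiom is vacuous and one instead invokes $\forall y(y\not<\overline{0})$) carefully separated from the case $m>0$, and be explicit that the meta-level schema ``for each $k<m$, $T\vdash\neg\varphi(\overline{n},\overline{k})$'' can be assembled into an \emph{object-level} $T$-proof of $\forall z<\overline{m}\,\neg\varphi(\overline{n},z)$ precisely because $m$ is a fixed standard natural number, so only finitely many such premisses are concatenated by the enumeration axiom. No appeal to consistency of $T$ is needed, and the formula $\theta$ itself is built uniformly from $\varphi$ without reference to~$n$.
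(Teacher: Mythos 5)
Your proposal is correct and follows essentially the same route as the paper's own proof: the identical formula $\theta(x,y)=\varphi(x,y)\wedge\forall z\!<\!y\,\neg\varphi(x,z)$, the same handling of the degenerate case $f(n)=0$ via $\forall y(y\not<\overline{0})$, the same use of the enumeration axiom to internalize the finitely many facts $\neg\varphi(\overline{n},\overline{k})$ for $k<f(n)$, and the same trichotomy-based case split to derive $y=\overline{f(n)}$ from $\theta(\overline{n},y)$. Nothing to change.
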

\begin{proof}
\

\noindent
If $f$ is representable by the formula $\psi(x,y)$ in $T$, then let $\theta(x,y)$ be   $\psi(x,y)\wedge\forall z\!<\!y\neg\psi(x,z)$. We now show that $T\vdash\theta(\overline{n},\overline{f(n)})$ and $T\vdash\theta(\overline{n},y)\rightarrow y=\overline{f(n)}$ hold for any $n\!\in\!\mathbb{N}$  as follows.  Reason in $T$:
If $z\!<\!\overline{f(n)}$, then if $f(n)\!=\!0$ we have a contradiction, otherwise (if $f(n)\!\neq\!0)$ we have  $z\!=\!\overline{i}$ for some $i\!<\!f(n)$. Of course for any such $i$ we have $\neg\psi(\overline{n},\overline{i})$; thus $\neg\psi(\overline{n},z)$.
If $\theta(\overline{n},y)$ and $y\!\neq\!\overline{f(n)}$, then either $y\!<\!\overline{f(n)}$ or $\overline{f(n)}\!<\!y$. In the former case we have $y\!=\!\overline{i}$ for some $i\!<\!f(n)$, if $f(n)\!\neq\!0$; otherwise $y\!<\!0$ is a contradiction, and so by   $\neg\psi(\overline{n},\overline{i})$ we have    $\neg\psi(\overline{n},y)$, which is a contradiction with $\theta(\overline{n},y)$. In the latter case, by $\forall z\!<\!y\neg\psi(\overline{n},z)$ we should have $\neg\psi(\overline{n},\overline{f(n)})$;  a contradiction again.
\end{proof}

\noindent
The question if the strong representability implies the provable totality was mentioned   open   in the first edition (1964) of the  classical book \cite{mendelson}. In 1965,   {\sc Verena Esther Huber-Dyson} showed  that  the strong representability implies the provable totality \cite{dyson}, and as a result this was Exercise~3.35 in the second edition (1979) of that book, and Exercise~3.32 in the third edition (1987), attributed to V.\, H.~Dyson. Then in the fourth (1997), the fifth (2009) and the sixth (2015) editions, this has been proved in Proposition~3.12, attributed to V.\, H.~Dyson again.

\begin{theorem}[Strong Representability $\Longrightarrow$ Provable Totality]\label{th:dyson}

\

\noindent
If a function is strongly representable in a theory, then it is provably total in that theory.
\end{theorem}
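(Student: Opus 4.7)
The plan is to exploit the fact that strong representability gives us a formula $\theta(x,y)$ which has the ``right'' behavior at every \emph{standard} numeral, but $T$ is not required to prove anything about its behavior on non-standard elements; in particular, $T$ is not asked to prove $\forall x\exists y\,\theta(x,y)$ or uniqueness for universally quantified $x$. To get provable totality, I will modify $\theta$ into a new formula $\eta(x,y)$ that, by a classical case split inside $T$, is guaranteed to define a total function on the entire (possibly non-standard) domain, while still agreeing with $\theta$ at each standard numeral.

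Concretely, starting from a strong representation $\theta(x,y)$ of $f$, I would introduce the abbreviation
\[
 \mathrm{Uniq}(x)\;\equiv\;\exists w\bigl(\theta(x,w)\wedge\forall z[\theta(x,z)\to z=w]\bigr)
\]
and set
\[
 \eta(x,y)\;\equiv\;\bigl[\theta(x,y)\wedge\mathrm{Uniq}(x)\bigr]\;\vee\;\bigl[y=\overline{0}\wedge\neg\mathrm{Uniq}(x)\bigr].
\]
The idea is that whenever $\theta(x,\cdot)$ actually has a unique witness, $\eta$ picks that witness; otherwise $\eta$ defaults to $0$. This makes totality and uniqueness obvious from pure classical logic once one case-splits on $\mathrm{Uniq}(x)$, and it forces no new commitments from $T$ on non-standard elements.

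With $\eta$ in hand, the verification breaks into two steps. First, for each standard $n$: strong representability gives $T\vdash\theta(\overline{n},\overline{f(n)})$ and $T\vdash\forall z(\theta(\overline{n},z)\to z=\overline{f(n)})$, and together these prove $\mathrm{Uniq}(\overline{n})$ in $T$; hence $T$ proves the left disjunct of $\eta(\overline{n},\overline{f(n)})$, yielding condition (1) of Definition~\ref{def:pt}. Second, for condition (2), I argue inside $T$: by the law of excluded middle, either $\mathrm{Uniq}(x)$ holds, in which case the unique $\theta$-witness is the unique $y$ satisfying $\eta(x,y)$, or $\neg\mathrm{Uniq}(x)$ holds, in which case $\overline{0}$ is the unique such $y$; in either case $\exists y\bigl(\eta(x,y)\wedge\forall z[\eta(x,z)\to y=z]\bigr)$.

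The only delicate point, and where I would be most careful, is the uniqueness portion of step two: one must check that the two disjuncts of $\eta$ cannot both be satisfied by distinct values. But this is automatic because the two disjuncts are mutually exclusive (one asserts $\mathrm{Uniq}(x)$, the other $\neg\mathrm{Uniq}(x)$), and within each disjunct uniqueness is either built in (the definition of $\mathrm{Uniq}$) or trivial ($y=\overline{0}$). No non-standard reasoning or induction is needed, and the argument is carried out in any theory closed under classical first-order logic, so it applies to every $T$ in which strong representability is defined.
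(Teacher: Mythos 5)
Your proposal is correct and is essentially the paper's own proof (Dyson's argument): your $\mathrm{Uniq}(x)$ is exactly the paper's $\exists!z\,\theta(x,z)$, your $\eta$ is the same disjunctive formula, and the verification proceeds by the same case split on $\mathrm{Uniq}(x)$ inside $T$. Nothing to add.
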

\begin{proof}
\

\noindent
Let us note that we do not put any condition on the  theory $T$; let $f$ be strongly representable by $\theta$ in   $T$.
Let $\exists ! u\, \mathcal{A}(u)$ be an abbreviation for the formula $\exists u \big(\mathcal{A}(u)\wedge\forall v [\mathcal{A}(v)\rightarrow v\!=\!u]\big)$. Put $$\eta(x,y)=\big[\exists ! z\,\theta(x,z)\wedge\theta(x,y)\big]\vee\big[\neg\exists ! z\,\theta(x,z)\wedge y\!=\!0\big].$$ For any $n\!\in\!\mathbb{N}$ we have $T\vdash\exists ! y\,\theta(\overline{n},y)$; thus from $T\vdash\theta(\overline{n},\overline{f(n)})$ we get $T\vdash\eta(\overline{n},\overline{f(n)})$. Now, we show that $T\vdash\forall x\exists ! y\, \eta(x,y)$. Reason inside $T$: If $\exists ! z\,\theta(x,z)$, then that unique $z$ which satisfies $\theta(x,z)$ also satisfies $\eta(x,z)$ and $\forall u\big[\eta(x,u)\rightarrow u\!=\!z\big]$, whence $\exists ! y \,\eta(x,y)$. If $\neg\exists ! z\,\theta(x,z)$, then  $y\!=\!0$ is the unique $y$ that satisfies  $\eta(x,y)$.
\end{proof}

The above proof of Dyson appears also in \cite[page 63]{jonessheph}, \cite[Proposition 3.8]{lambek} and \cite[Proposition 9.4.2]{su}. The following theorem 
is usually proved by showing that every weakly representable function is recursive and that every recursive function is (strongly) representable; see e.g. \cite[Corollary I.7.8]{odifreddi} or \cite[Theorem 4.5]{rautenberg}. Here we present a new proof.

\begin{theorem}[Weak Representability $\Longrightarrow$ Representability]\label{thm:main}

\

\noindent
For a theory  $T$, suppose the formula ${\rm Proof}_T(z,x)$ states that ``$z$ is (the G\"odel code of) the proof of a formula (with G\"odel code) $x$ in $T$'', and suppose that $T$ has the following properties:

\noindent
\textup{($\textsf{a}$)}\, $T\vdash\overline{i}\!\neq\!\overline{j}$ and $T\vdash\overline{n}\!\leqslant\!\overline{m}$ and $T\vdash\forall y(\overline{m}\!\leqslant\!y
\rightarrow\overline{n}\!\leqslant\!y)$, for any $i,j,n,m\!\in\!\mathbb{N}$ with $i\!\neq\!j$ and $n\!\leqslant\!m$;

\noindent
\textup{($\textsf{b}$)}\, $T\vdash \forall y (y\!\leqslant\!\overline{n} \,\vee\,  \overline{n}\!\leqslant\!y)$, for all $n\!\in\!\mathbb{N}$;

\noindent
\textup{($\textsf{c}$)}\, $T\vdash \forall y (y\!\leqslant\!\overline{n} \leftrightarrow \bigvee\!\!\!\!\!\bigvee_{i=0}^n y\!=\!\overline{i})$, for all $n\!\in\!\mathbb{N}$;

\noindent
\textup{($\textsf{d}$)}\, if $T\vdash\phi$ and $k$ is the G\"odel code of this proof, then   $T\vdash{\rm Proof}_T(\overline{k},\ulcorner\phi\urcorner)$;

\noindent
\textup{($\textsf{e}$)}\, if $k$ is not the G\"odel code of a proof of $\phi$ in $T$, then $T\vdash\neg{\rm Proof}_T(\overline{k},\ulcorner\phi\urcorner)$, in particular, if $T\not\vdash\phi$, then $T\vdash\neg{\rm Proof}_T(\overline{l},\ulcorner\phi\urcorner)$, for any $l\!\in\!\mathbb{N}$.

\noindent   Then weak representability of  a function   implies its representability in $T$.
\end{theorem}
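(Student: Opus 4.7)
The plan is to exploit the proof predicate ${\rm Proof}_T$ to promote a weak representation $\varphi(x,y)$ of $f$ to a ``least-witnessing-proof'' style representation. Concretely, define
\[\psi(x,y)\;:\equiv\;\exists z\,\Big[{\rm Proof}_T\big(z,\ulcorner\varphi(\dot x,\dot y)\urcorner\big)\wedge\forall w\!\leqslant\!z\,\forall y'\!\leqslant\!z\,\big(y'\!\neq\!y\rightarrow\neg{\rm Proof}_T\big(w,\ulcorner\varphi(\dot x,\dot{y'})\urcorner\big)\big)\Big],\]
where $\ulcorner\varphi(\dot x,\dot y)\urcorner$ stands for the (primitive recursive) substitution term that evaluates to the G\"odel code of $\varphi(\overline n,\overline m)$ when $x\!=\!n$, $y\!=\!m$. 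Intuitively, $\psi(\overline n,\overline m)$ says that there is a proof $z$ of $\varphi(\overline n,\overline m)$ such that, within the window $[0,z]$, no proof of $\varphi(\overline n,\overline{y'})$ exists for any $y'\!\neq\!m$.

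For the positive clause of Definition~\ref{def:r}, suppose $f(n)\!=\!m$, and let $k$ be the least G\"odel code of a proof of $\varphi(\overline n,\overline m)$ in $T$. I would instantiate $z\!=\!\overline k$: condition (\textsf{d}) gives $T\vdash{\rm Proof}_T(\overline k,\ulcorner\varphi(\overline n,\overline m)\urcorner)$. To handle the uniqueness conjunct I would apply (\textsf{c}) twice, collapsing the two bounded quantifiers into finite conjunctions over pairs $(\overline j,\overline i)$ with $j,i\!\leqslant\!k$. For each pair with $i\!\neq\!m$, weak representability rules out externally that $j$ is a proof of $\varphi(\overline n,\overline i)$: otherwise $T\vdash\varphi(\overline n,\overline i)$ would force $f(n)\!=\!i$ by clause (2) of Definition~\ref{def:wr}, contradicting $f(n)\!=\!m$. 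Condition (\textsf{e}) then supplies $T\vdash\neg{\rm Proof}_T(\overline j,\ulcorner\varphi(\overline n,\overline i)\urcorner)$ for each such pair, and assembling these yields $T\vdash\psi(\overline n,\overline m)$.

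For the negative clause, suppose $f(n)\!\neq\!m$; put $m'\!=\!f(n)$ and let $k'$ be the least proof of $\varphi(\overline n,\overline{m'})$ in $T$. Reason in $T$: take an arbitrary $z$ with ${\rm Proof}_T(z,\ulcorner\varphi(\overline n,\overline m)\urcorner)$; by (\textsf{b}) either $z\!\leqslant\!\overline{k'}$ or $\overline{k'}\!\leqslant\!z$. In the first subcase I would use (\textsf{c}) to enumerate $z$ over $\overline 0,\ldots,\overline{k'}$ and, since $T\not\vdash\varphi(\overline n,\overline m)$, invoke (\textsf{e}) on each numeral to contradict the assumption on $z$. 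In the second subcase I would falsify the uniqueness conjunct with $w\!=\!\overline{k'}$ and $y'\!=\!\overline{m'}$: $w\!\leqslant\!z$ is the hypothesis, $\overline{m'}\!\leqslant\!z$ follows from $\overline{m'}\!\leqslant\!\overline{k'}$ together with $\overline{k'}\!\leqslant\!z$ via the monotonicity packaged in (\textsf{a}), $\overline{m'}\!\neq\!\overline m$ is directly from (\textsf{a}), and ${\rm Proof}_T(\overline{k'},\ulcorner\varphi(\overline n,\overline{m'})\urcorner)$ is (\textsf{d}). Hence $T\vdash\neg\psi(\overline n,\overline m)$.

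The most delicate point I anticipate is the numerical inequality $m'\!\leqslant\!k'$ needed for the witnessing step: this is a mild and standard property of any reasonable G\"odel numbering (the proof code $k'$ must mention the numeral $\overline{m'}$, whose own code already exceeds $m'$), and together with (\textsf{a}) it delivers $\overline{m'}\!\leqslant\!z$. A secondary, cosmetic issue is the dotted substitution notation $\ulcorner\varphi(\dot x,\dot y)\urcorner$, which is the usual primitive recursive substitution function and is provably well-behaved in any theory satisfying (\textsf{a})--(\textsf{e}); the bounding of $y'$ by $z$ is precisely what keeps the whole argument inside the limited syntactic strength that (\textsf{e}) provides, since an unbounded $y'$ would force an unbounded universal claim of unprovability that (\textsf{e}) does not guarantee.
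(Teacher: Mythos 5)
Your proposal is correct and follows essentially the same route as the paper's own proof: the same ``least-witnessing-proof'' formula (yours writes ${\rm Proof}_T(z,\cdot)$ with an explicit $\forall w\!\leqslant\!z$ where the paper packages this as a bounded-provability predicate $\boldsymbol\varrho(z,x)=\exists u\!\leqslant\!z\,{\rm Proof}_T(u,x)$), the same use of (\textsf{c})--(\textsf{e}) for the positive clause, and the same case split via (\textsf{b}) on $z$ versus the numeral of the known proof code for the negative clause. You are in fact slightly more careful than the paper on the coding inequality $f(n)\leqslant k$, which the paper asserts without comment.
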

\begin{proof}
\noindent

\noindent
Suppose the function $f$ is weakly representable by $\varphi$ in $T$. For the (bounded provability) predicate $\boldsymbol\varrho(z,x) = \exists u\!\leqslant\!z \,{\rm Proof}_T(u,x)$, let $\psi(x,y)=\exists z\big[\boldsymbol\varrho(z,\ulcorner\varphi(x,y)\urcorner)
\wedge\forall y'\!\leqslant\!z\, [y'\!\neq\!y\rightarrow\neg\boldsymbol\varrho
(z,\ulcorner\varphi(x,y')\urcorner)]
\big]$. For showing the representability of $f$ by $\psi$ in $T$ we prove that:

\qquad {\bf (1)} $T\vdash\psi(\overline{n},\overline{f(n)})$ for all $n\!\in\!\mathbb{N}$, and

\qquad {\bf (2)} $T\vdash\neg\psi(\overline{n},\overline{m})$ for all $n,m\!\in\!\mathbb{N}$ with $m\!\neq\!f(n)$.

\noindent {\bf (1)}: Fix an $n\!\in\!\mathbb{N}$ and let $k\!\in\!\mathbb{N}$ be a G\"odel code for the proof of $T\vdash\varphi(\overline{n},\overline{f(n)})$; so, we have  $f(n)\!\leqslant\!k$. By ($\textsf{d}$) above we have   $T\vdash{\rm Proof}_T(\overline{k},
\ulcorner\varphi(\overline{n},\overline{f(n)})\urcorner)$, and so $T\vdash\boldsymbol\varrho(\overline{k},
\ulcorner\varphi(\overline{n},\overline{f(n)})\urcorner)$ by ($\textsf{a}$) above. Now, for any $i\!\in\!\mathbb{N}$ with $i\!\neq\!f(n)$ we have that  $T\not\vdash\varphi(\overline{n},\overline{i})$, and so by ($\textsf{e}$) above, $T\vdash\neg{\rm Proof}_T(\overline{l},\ulcorner
\varphi(\overline{n},\overline{i})\urcorner)$ for any $l\!\in\!\mathbb{N}$. Thus,  by ($\textsf{c}$) above, $T\vdash\neg
\boldsymbol\varrho(\overline{l},\ulcorner
\varphi(\overline{n},\overline{i})\urcorner)$.
Reason in $T$: for any $y'$  with $y'\!\leqslant\!\overline{k}$ and $y'\!\neq\!\overline{f(n)}$, by  ($\textsf{c}$) above, we have $y'\!=\!\overline{j}$ for some $j\!\leqslant\!k$ with $j\!\neq\!f(n)$. For any such $j$ we have $\neg
\boldsymbol\varrho(\overline{k},\ulcorner
\varphi(\overline{n},\overline{j})\urcorner)$; and so, by ($\textsf{c}$) above, the sentence $\forall y'\!\leqslant\!\overline{k} \, [y'\!\neq\!y\rightarrow\neg\boldsymbol\varrho
(\overline{k},\ulcorner\varphi(\overline{n},y')\urcorner)]$ holds. Thus, $\psi(\overline{n},\overline{f(n)})$.

\noindent {\bf (2)}:  Fix some $n,m\!\in\!\mathbb{N}$ with $m\!\neq\!f(n)$.  Let us note that we already have:

\centerline{$\neg\psi(x,y)\!\equiv\!\forall   z\big[\boldsymbol\varrho(z,\ulcorner\varphi(x,y)\urcorner)
\rightarrow\exists y'\!\leqslant\!z\, [y'\!\neq\!y\wedge\boldsymbol\varrho
(z,\ulcorner\varphi(x,y')\urcorner)]
\big]$.}

\noindent For proving $T\vdash\neg\psi(\overline{n},\overline{m})$ we show that

\centerline{$T\vdash\forall   z\big[\boldsymbol\varrho(z,
\ulcorner\varphi(\overline{n},\overline{m})\urcorner)
\rightarrow \overline{f(n)}\!\leqslant\!z \wedge \overline{f(n)}\!\neq\!\overline{m}\wedge\boldsymbol\varrho
(z,\ulcorner\varphi(\overline{n},\overline{f(n)})\urcorner)
\big]$.}

\noindent Let $k\!\in\!\mathbb{N}$ be a G\"odel code for the proof of $T\vdash\varphi(\overline{n},\overline{f(n)})$; so, $f(n)\!\leqslant\!k$. Also, from $T\not\vdash\varphi(\overline{n},\overline{m})$, by ($\textsf{e}$) above, we have $T\vdash\neg\boldsymbol\varrho
(\overline{l},\varphi
(\ulcorner\overline{n},\overline{m})\urcorner)$, for any $l\!\in\!\mathbb{N}$. Reason in $T$: for any $z$, by ($\textsf{b}$) above, we have either {\bf (2.i)}~$z\!\leqslant\!\overline{k}$ or {\bf (2.ii)}~$\overline{k}\!\leqslant\!z$.
{\bf (2.i)}: If $z\!\leqslant\!\overline{k}$ then $z=\overline{i}$ for some $i\!\leqslant\!k$, by ($\textsf{c}$) above. Now,
$\boldsymbol\varrho(\overline{i},
\ulcorner\varphi(\overline{n},\overline{m})\urcorner)
\rightarrow \overline{f(n)}\!\leqslant\!\overline{i}\wedge \overline{f(n)}\!\neq\!\overline{m}\wedge\boldsymbol\varrho
(\overline{i},\ulcorner\varphi
(\overline{n},\overline{f(n)})\urcorner)$ follows from
$\neg\boldsymbol\varrho
(\overline{i},\ulcorner\varphi
(\overline{n},\overline{m})\urcorner)$;
thus $\neg\psi(\overline{n},\overline{m})$ holds.
{\bf (2.ii)}:  If $\overline{k}\!\leqslant\!z$, then $\overline{f(n)}\!\leqslant\!z$,  by ($\textsf{a}$) above, which also implies $\overline{f(n)}\!\neq\!\overline{m}$. On the other hand, we have  ${\rm Proof}_T(\overline{k},\ulcorner\varphi
(\overline{n},\overline{f(n)})\urcorner)$ and so $\exists u\!\leqslant\!z\,{\rm Proof}_T(u,\ulcorner\varphi
(\overline{n},\overline{f(n)})\urcorner)$, or equivalently $\boldsymbol\varrho
(z,\ulcorner\varphi
(\overline{n},\overline{f(n)})\urcorner)$. Thus,
$\neg\psi(\overline{n},\overline{m})$ holds since
we have $\overline{f(n)}\!\leqslant\!z \wedge \overline{f(n)}\!\neq\!\overline{m}\wedge\boldsymbol\varrho
(z,\ulcorner\varphi(\overline{n},\overline{f(n)})\urcorner)$.
\end{proof}

\noindent Let us note that
the (very weak) finitely axiomatizable
Robinson's Arithmetic $\textsf{\textsl{Q}}$
satisfies all the conditions ($\textsf{a}$,$\textsf{b}$,$\textsf{c}$,$\textsf{d}$,$\textsf{e}$) in Theorem~\ref{thm:main}.

\paragraph{Acknowledgements:}
\!\!\!\!\!The author was supported by   grant \textnumero \, \texttt{S}\! /\! \textsf{712} from the University of Tabriz, {\sc Iran}.

\bibliographystyle{wileynum}



\smallskip

\medskip

\bigskip

\noindent\rule{0.625\textwidth}{0.5pt}


\noindent
Research Institute for Fundamental Sciences,

\noindent
University of Tabriz, 29 Bahman Boulevard,

\noindent
P.O.Box 51666-16471, Tabriz, {\sc Iran}.

\noindent
\textsf{salehipour@tabrizu.ac.ir}

\vspace{-0.5em}

\noindent\rule{0.625\textwidth}{0.5pt}

\end{document}